\def\qed{\hfill {\hbox{${\vcenter{\vbox{               
   \hrule height 0.4pt\hbox{\vrule width 0.4pt height 6pt
   \kern5pt\vrule width 0.4pt}\hrule height 0.4pt}}}$}}}
\def\hat{\widehat}
\newtheorem{theorem}{Theorem}
\newtheorem{definition}{Definition}
\newtheorem{lemma}[theorem]{Lemma}
\newtheorem{example}{Example}
\newtheorem{remark}[example]{Remark}
\newenvironment{proof}[1][Proof]{\smallskip\noindent{\bf #1.}\quad}%
{\qed\par\medskip}
\date{}
\title{\Large \textbf{Semiquandles and flat virtual knots}}
\author{Allison Henrich \and Sam Nelson} 
\begin{document}
\maketitle

\begin{abstract}
We introduce an algebraic structure we call \textit{semiquandles}
whose axioms are derived from flat Reidemeister moves. Finite
semiquandles have associated counting invariants and enhanced invariants
defined for flat virtual knots and links. We also introduce 
\textit{singular semiquandles} and \textit{virtual singular semiquandles}
which define invariants of flat singular virtual knots and links. 
As an application, we use semiquandle invariants to compare two
Vassiliev invariants.
\end{abstract}

\textsc{Keywords:} Flat knots and links, virtual knots and links,
singular knots and links, semiquandles, Vassiliev invariants

\textsc{2000 MSC:} 57M27, 57M25

\section{\large \textbf{Introduction}}

Recent works such as \cite{K} take a combinatorial approach to knot 
theory in which knots and links are regarded as equivalence
classes of knot and link diagrams. New types of combinatorial knots 
and links can then be defined by introducing new types of crossings
and Reidemeister-style moves that govern their interactions.
These new combinatorial classes of knots and links have various topological
and geometric interpretations relating to simple closed curves in
3-manifolds, rigid vertex isotopy of graphs, etc.

A \textit{flat crossing} is a classical crossing in which we ignore the
over/under information. A flat knot or link  is a planar projection or 
shadow of a knot or link on the surface on which the knot or link diagram 
is drawn. Every classical knot diagram may be regarded as a decorated or 
\textit{lift} of a flat knot, and conversely every classical knot diagram 
has a corresponding flat \textit{shadow.}

At first glance, flat knots might seem uninteresting since flattening
classical crossings apparently throws away the information which defines 
knotting. However, a little thought reveals potential applications of
flat crossings: invariants of links with classical intercomponent
crossings and flat intracomponent crossings are related to link homotopy
and Milnor invariants of ordinary classical links, for example.

Another place where flat crossings prove useful is in
virtual knot theory. Every purely flat knot is trivial, i.e.,
reducible by flat Reidemeister moves to the unknot. However, 
flat virtual knots and links (i.e., diagrams with virtual and flat
crossings) are generally non-trivial. Non-triviality of a flat virtual 
says that no choice of classical crossing information for the flat crossings
yields a classical knot. Hence, flat crossings are useful in the study
of non-classicality in virtual knots.

A \textit{singular crossing} is a crossing where two strands are fused
together. Singular knots and links may be understood as rigid vertex isotopy
classes of knotted and linked graphs, and they play a
role in the study of Vassiliev invariants of classical knots and links.

In this paper, we define an algebraic structure we call a \textit{semiquandle}
which yields counting invariants for flat virtual knots. The paper is
organized as follows. In section \ref{fvs} we define flat, singular and 
virtual knots and links. In section \ref{sq} we define semiquandles and give
some examples. In section \ref{ssq} we define singular semiquandles by
including operations at singular crossings. 
In section \ref{vssq} we define virtual semiquandles and virtual singular
semiquandles by including an operation at virtual crossings.
In section \ref{ci} we give examples to show that the 
counting invariants with respect to finite semiquandles can distinguish
flat virtual knots and links. In section \ref{A} we give an application to 
computing Vassiliev invariants of virtual knots. In section \ref{q} we 
collect some questions for further research.

\section{\large \textbf{Flat knots, virtual knots and singular knots}}
\label{fvs}

Let us introduce several types of knots we will discuss in this paper. We assume
that all knots are oriented unless otherwise specified. The simplest type of
knot of those we consider, a \emph{flat knot}, is an immersion of $S^1$ in
$\mathbf{R}^2$. Alternatively, a flat knot can be described as an equivalence
class of knot diagrams where under/over strand information at each crossing is
unspecified. The equivalence relation is given by flat versions of the
Reidemeister moves. Here, we illustrate the flat Reidemeister moves.

\[\scalebox{0.25}{\includegraphics{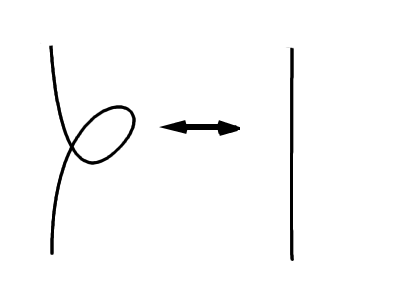}} \quad
\scalebox{0.25}{\includegraphics{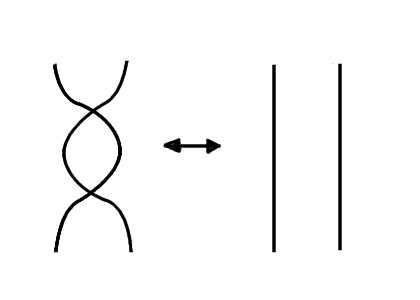}} \quad
\scalebox{0.25}{\includegraphics{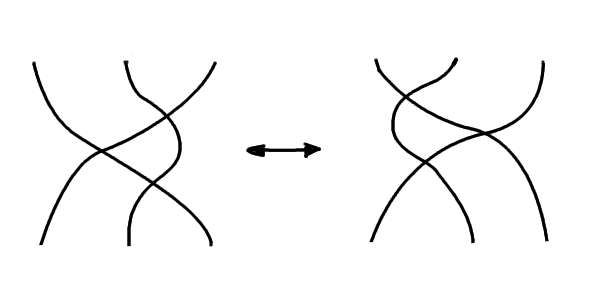}}\]

It is an easy exercise to show that any flat knot is related to the trivial flat
knot (i.e. the flat knot with no crossings) by a sequence of flat Reidemeister
moves. While the theory of flat knots appears uninteresting, if we consider the
analogous theory of flat virtual knots, we enter a highly non-trivial category.

A \emph{flat virtual knot} is a decorated immersion of $S^1$ in $\mathbf{R}^2$,
where each crossing is decorated to indicate that it is either flat or virtual.
(Virtual crossings are pictured by an encircled flat crossing.) Once again, we
may also describe a flat virtual knot as an equivalence class of virtual knot
diagrams where under/over strand information at each classical crossing is
unspecified. The corresponding equivalence relation is given by the flat
versions of the virtual Reidemeister moves in addition to the flat versions of
the ordinary Reidemeister moves.

\[\scalebox{0.25}{\includegraphics{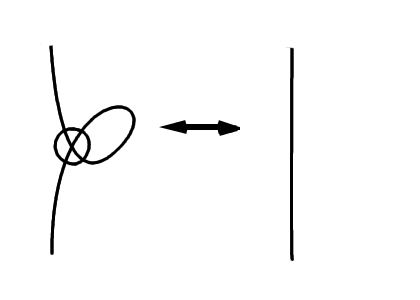}} \quad
\scalebox{0.25}{\includegraphics{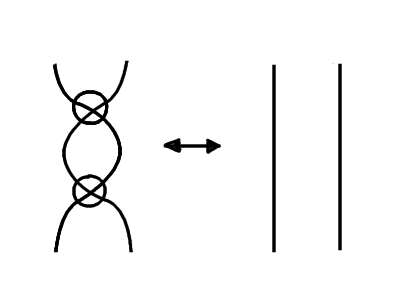}}\]
\[\scalebox{0.25}{\includegraphics{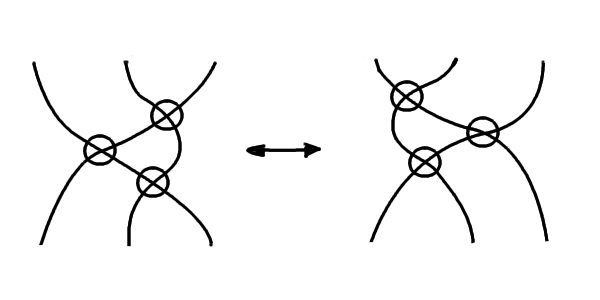}} \quad
\scalebox{0.25}{\includegraphics{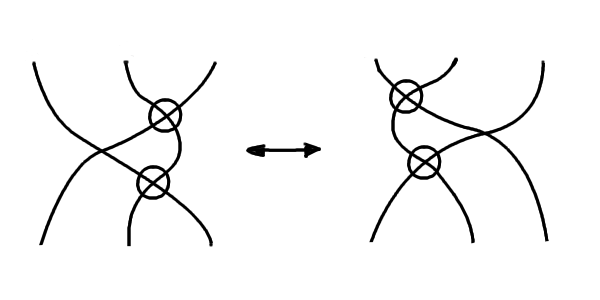}}\]

Note that the following move is forbidden.

\[\scalebox{0.25}{\includegraphics{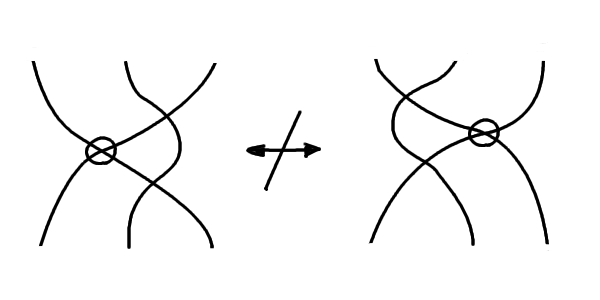}}\]

As with ordinary virtual knots, flat virtual knot diagrams have a geometric
interpretation as flat knot diagrams on surfaces. In this case, the virtual
crossings are interpreted as artifacts of a projection of the knot diagram on
the surface to a knot diagram in the plane~\cite{K}.

Finally, we'd like to consider flat knots and flat virtual knots that have
\emph{singularities}. These singularities should be thought of as rigid
vertices, or places where the knot is actually glued to itself. Thus,
\emph{flat singular knots} are simply equivalence classes of flat knots where
some crossings are decorated to indicate that they are singular. The
Reidemeister moves corresponding to flat singular equivalence are the ordinary
flat equivalence moves together with the following two moves.

\[\scalebox{0.25}{\includegraphics{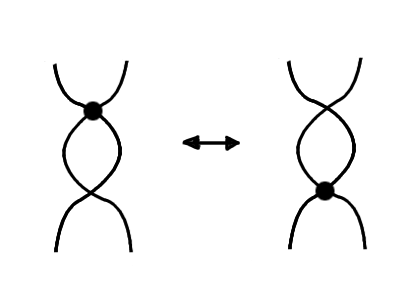}} \quad
\scalebox{0.25}{\includegraphics{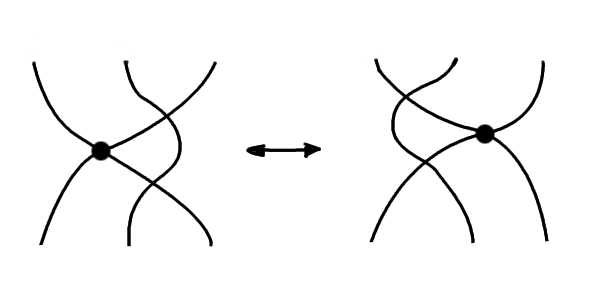}}\]

Similarly, \emph{flat virtual singular knots} are equivalence classes of flat
virtual knots where some of the  crossings may be designated as singular.
Hence, there are three types of crossings that may be contained in a diagram of
a flat virtual singular knot. The equivalence relation is given by all of the
previous flat, virtual, and singular moves together with the following move.

\[\scalebox{0.25}{\includegraphics{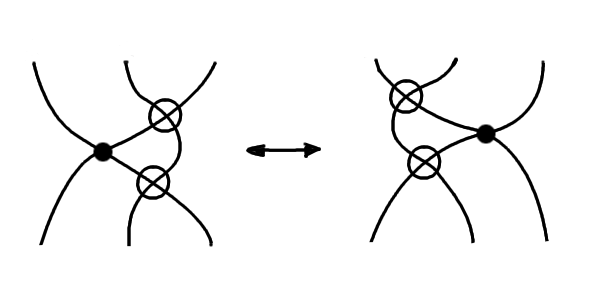}}\]

We call the simplest non-trivial flat virtual singular knot that contains all
three types of crossings the \textit{Triple Crazy Trefoil.} This is the knot 
pictured below.

\[\scalebox{0.25}{\includegraphics{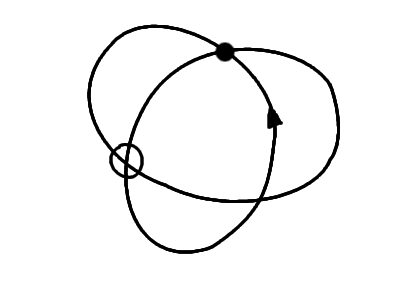}}\]

\section{\large \textbf{Semiquandles}} \label{sq}

A number of algebraic structures have been defined in recent years with
axioms derived from variations on the Reidemeister moves. The earliest of 
these is the \textit{quandle} (see \cite{J,M}) in which we have generators
corresponding to arcs in a link diagram and an invertible binary operation
at crossings. 

Subsequent papers have generalized this idea in various ways. In \cite{FR}
ambient isotopy is replaced with framed isotopy to define \textit{racks}.
In \cite{KR, FRS}, arcs in an oriented knot diagram are replaced with 
\textit{semiarcs} to define \textit{biquandles}. In \cite{KM}, an operation
at virtual crossings is included in the biquandle definition to yield 
\textit{virtual biquandles}. 

\begin{definition}
\textup{A \textit{semiquandle} is a set $X$ with two binary
operations $(x,y)\mapsto x^y, x_y$ such that for all $x,y,z\in X$ we have
\begin{list}{}{}
\item[(0)]{for all $x,y\in X$
there are \textbf{unique} $w,z\in X$ with $x=w^y$ and $x=z_y$,}
\item[(i)]{$x_y=y$ iff $y^x = x$,}
\item[(ii)]{$(x_y)^{(y^x)}=x$ and $(x^y)_{(y_x)}=x$, and}
\item[(iii)]{$(x^y)^z=(x^{z_y})^{y^z}$, $(y_x)^{z_{x^y}}=(y^z)_{x^{z_y}}$, 
and $(z_{x^y})_{y_x}=(z_y)_x$.}
\end{list}}
\end{definition}

Axiom (0) says the actions $x\mapsto x^y$ and $x\mapsto x_y$ are invertible.
The unique $z,w$ in axiom (0) will be denoted $z=x_{y^{-1}}$ and $w=x^{y^{-1}}$.
These axioms come from dividing an oriented flat knot into semiarcs, i.e. 
edges between vertices in the flat diagram regarded as a graph, and then
translating the flat Reidemeister moves into algebraic axioms. 

\[\includegraphics{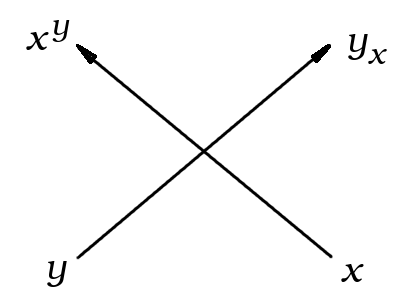}\]

In the first Reidemeister move, 
right-invertibility guarantees the uniqueness of $y$ given $x$, and the
relationship between $x$ and $y$ becomes axiom (i).
\[\includegraphics{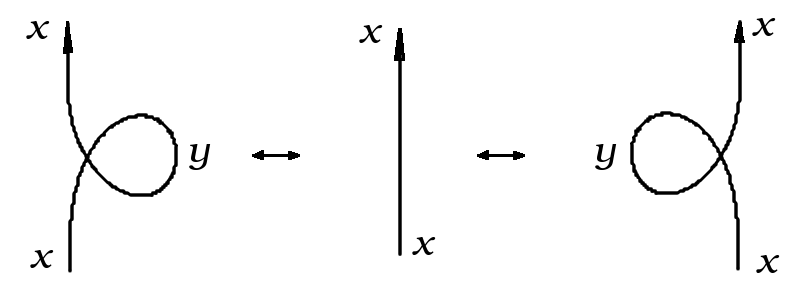} \] 

The direct II move, in which both strands are oriented in the same direction,
give us axiom (ii). Given axiom (0), the reverse II move yields the same
relationship between $x$ and $y$ where the uncrossed strands are labeled
$x$ and $y^x$.
\[\includegraphics{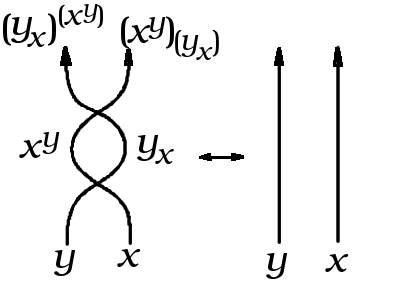} \quad \includegraphics{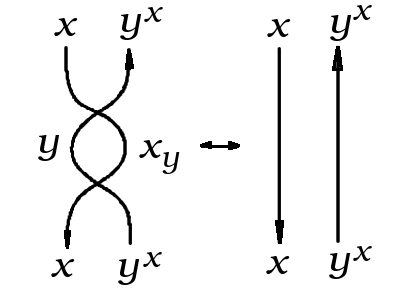} \]

Reidemeister move III yields the three equations in axiom (iii).
\[\includegraphics{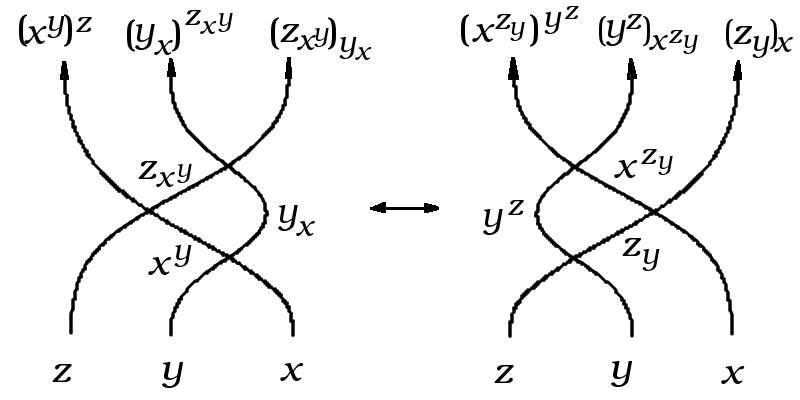}\]

\begin{definition}
\textup{For any flat virtual link $L$, the \textit{fundamental semiquandle}
$FSQ(L)$ of $L$ is the set of equivalence classes of semiquandle words
in a set of generators corresponding to semiarcs in a diagram $D$ of $L$, i.e.
edges in the graph obtained from $D$ by considering flat crossings as 
vertices, under the equivalence relation generated by the semiquandle
axioms and the relations at the crossings. As with the knot quandle, 
fundamental rack and knot biquandle, we can express the fundamental 
semiquandle with a presentation read from a diagram.}
\end{definition}

\begin{example}
\textup{The pictured \textit{flat Kishino knot} has the listed fundamental 
semiquandle presentation.}
\[\raisebox{-0.5in}{\includegraphics{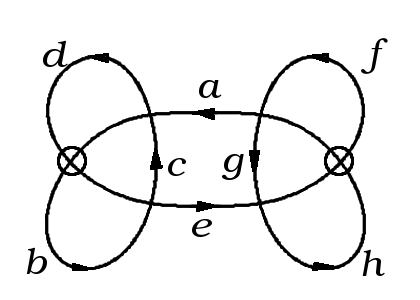}} \quad 
\begin{array}{rl} FSQ(K)=
\langle a, b, c, d, e, f, g, h \ | \ &  a^c=b,\ c_a=d,\ b^d=e, \ d_b=c,\\
 &  e^g=f,\ g_e=h, \ f^h = g, h_f=a  \rangle\end{array}\]
\end{example}

\begin{remark}
\textup{An alternative definition for the fundamental semiquandle of a flat
virtual knot is that $FSQ(L)$ is the quotient of the (strong) knot 
biquandle of any lift of $L$ (i.e., choice of classical crossing type 
for the flat crossings of $L$) under the equivalence relation generated by 
setting $a^{\overline{b}}\sim a_{b}$ and $a_{\overline{b}}\sim a^{b}$ for all
$a,b\in B(L)$. Indeed, this operation yields a ``flattening'' functor 
$SQ:\mathcal{B}\to \mathcal{S}$
from the category of strong biquandles to the category of semiquandles.}
\end{remark}

\begin{example}
\textup{For any set $X$ and bijection $\sigma:X\to X$, the operations
$x^y=\sigma(x)$ and $x_y=\sigma^{-1}(x)$ define a semiquandle
structure on $X$. We call this type of semiquandle a \textit{constant
action semiquandle} since the actions of $y$ on $x$ is constant as
$y$ varies.}
\end{example}

As is the case with quandles and biquandles (see \cite{HN,NV}), for a 
finite semiquandle
$X=\{x_1,\dots,x_n\}$ we can conveniently express the semiquandle
structure with a block matrix $M_X=[U|L]$ where $U_{i,j}=k$ and 
$L_{i,j}=l$ for $x_k=(x_i)^{(x_j)}$ and $x_l=(x_i)_{(x_j)}$. This matrix 
notation enables us to do computations with semiquandles without
the need for formulas for $x^y$ and $x_y$.

\begin{example}
\textup{The constant action semiquandle on $X=\{1,2,3\}$ with $\sigma=(132)$
has semiquandle matrix}
\[M_X=\left[\begin{array}{ccc|ccc}
3 & 3 & 3 & 2 & 2 & 2 \\
1 & 1 & 1 & 3 & 3 & 3 \\
2 & 2 & 2 & 1 & 1 & 1
\end{array}\right].\]
\end{example}

\begin{example}\label{sq1}
\textup{Any (strong) biquandle in which $a^{\overline{b}}=a_b$ and 
$a_{\overline{b}}=a^b$ is a semiquandle. Indeed, an alternative name 
for semiquandles might be \textit{symmetric biquandles}. An example
of a non-constant action semiquandle found in \cite{NV} is}
\[M_T=\left[\begin{array}{cccc|cccc}
1 & 4 & 2 & 3 & 1 & 3 & 4 & 2 \\
2 & 3 & 1 & 4 & 3 & 1 & 2 & 4 \\
4 & 1 & 3 & 2 & 2 & 4 & 3 & 1 \\
3 & 2 & 4 & 1 & 4 & 2 & 1 & 3
\end{array}\right].\]
\end{example}

\section{\large \textbf{Singular semiquandles}}
\label{ssq}

Let us now consider what happens to our algebraic structure when we
allow singular crossings in an oriented flat virtual knot. As with
flat crossings, we define two binary operations at a singular crossing. 
One notable difference is that unlike flat crossings, singular crossings 
are permanent -- there are no moves which either introduce or remove 
singular crossings. Indeed, the number of singular crossings is an 
invariant of singular knot type. In particular, we do not need 
right-invertibility for our operations at singular crossings.

\[\includegraphics{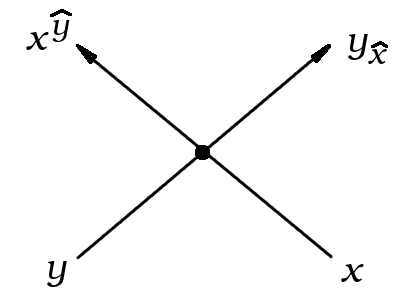}\]

\begin{definition}
\textup{Let $X$ be a semiquandle. A \textit{singular semiquandle} structure
on $X$ is a pair of binary operations on $X$ denoted $(x,y)\mapsto x^{\hat{y}},
x_{\hat{y}}$ satisfying for all $x,y,z\in X$
\begin{list}{}{}
\item[(hi)]{$(y_x)^{(\hat{x^y})}=(y_{\hat{x}})^{(x^{\hat{y}})}$ and 
$(x^y)_{(\hat{y_x})}=(x^{\hat{y}})_{(y_{\hat{x})}}$}
\item[(hii)]{$(x^y)^{\hat{z}}=(x^{\hat{z_y}})^{y^z}$, $(y_x)^{{z_{\hat{x^y}}}}= 
(y^z)_{x^{\hat{z_y}}}$ and $(z_{\hat{x^y}})_{y_x}=(z_y)_{\hat{x}}. $}
\end{list}}
\end{definition}

We call axioms (hi) and (hii) the \textit{hat axioms} for the obvious 
reason. These axioms come from the subset of the oriented
singular flat Reidemeister moves pictured below.
\[\includegraphics{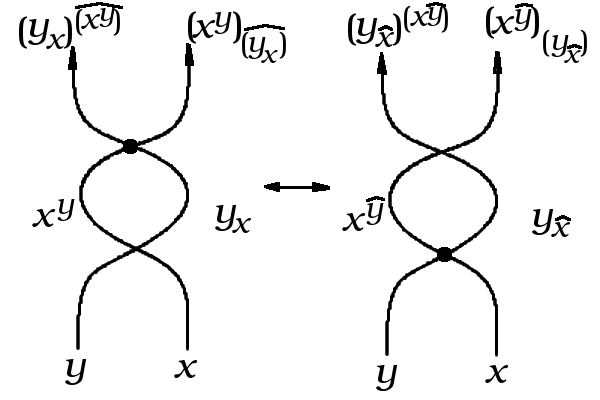} \quad \includegraphics{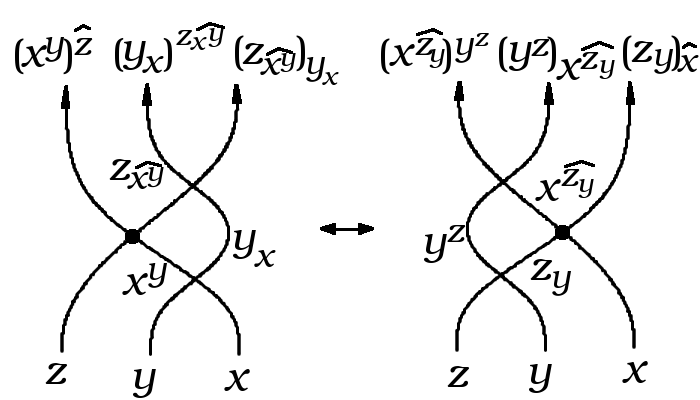}\]

To see that the two pictured oriented singular  moves are sufficient to give 
us all of the oriented flat singular moves, we note the following key lemmas.

\bigskip

\begin{lemma}
The move \raisebox{-0.5in}{\includegraphics{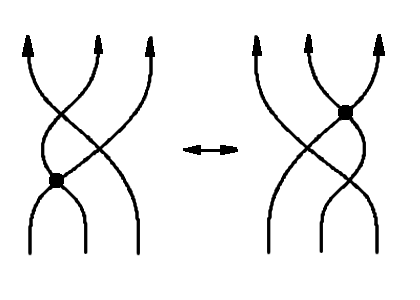}}
follows from the flat Reidemeister moves and the two pictured moves.
\end{lemma}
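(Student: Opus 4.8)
The plan is to exhibit the move asserted in the statement as an explicit finite sequence of the moves we are allowed to use, following the standard strategy for deriving one Reidemeister-type move from others: introduce a cancelling pair of flat crossings, use one of the two given singular moves to drag the rigid singular vertex past them, rearrange the resulting flat crossings with flat Reidemeister III moves, and finally cancel the introduced pair.

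Concretely, starting from the left-hand diagram I would first perform a flat Reidemeister II move to create two adjacent, cancelling flat crossings on the strand running past the singular vertex. The sole purpose of this step is to produce a local configuration matching the left-hand side of one of the two given singular moves --- the singular Reidemeister III move pictured above --- so that this move may be applied to carry the singular vertex across a flat crossing. I would then use ordinary (non-singular) flat Reidemeister III moves to slide the remaining flat crossings into their final positions, and conclude with a second flat Reidemeister II move deleting the cancelling pair introduced at the outset, which yields the right-hand diagram. Because each move in the chain is a two-sided equivalence, reading the sequence in reverse establishes the converse implication as well.

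The step I expect to be the main obstacle is keeping track of the orientations of the strands so that the given singular moves, which are stated only for particular orientations, genuinely apply at the moment they are invoked. Since singular crossings are rigid and, unlike flat crossings, carry no invertibility that we could exploit to trade one orientation for another (recall that axiom (0) was exactly what allowed the reverse flat II move to be replaced by the direct one), the argument cannot absorb a mismatched orientation: the introduced crossings must be placed so as to reproduce precisely the pictured singular configuration. Checking that the intervening flat crossings can always be transported into the required position by flat Reidemeister III is the remaining point needing care, but this is routine given the flexibility of the purely flat moves.
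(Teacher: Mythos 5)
Your proposal follows essentially the same route as the paper's proof: the paper's (purely diagrammatic) argument is exactly a conjugation of the given oriented singular move by flat moves --- a short sequence of diagrams in which flat Reidemeister moves set up the configuration where the pictured singular move applies, the singular move is performed, and the auxiliary flat crossings are then removed. Your outline (flat II to introduce a cancelling pair, the given singular III move, flat III adjustments, flat II to cancel), together with your correct observation that the orientations must be arranged to match the pictured singular move exactly, is precisely this strategy, so the proposal is correct and matches the paper's approach.
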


\begin{proof}

\[\raisebox{-0.5in}{\scalebox{0.8}{\includegraphics{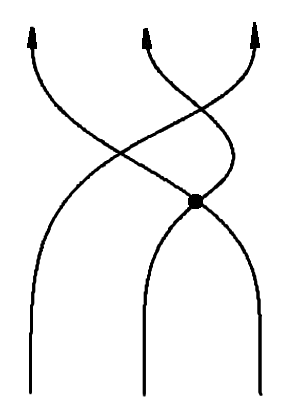}}} 
\longleftrightarrow
\raisebox{-0.5in}{\scalebox{0.8}{\includegraphics{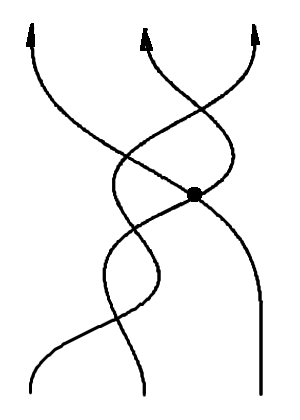}}} 
\longleftrightarrow
\raisebox{-0.5in}{\scalebox{0.8}{\includegraphics{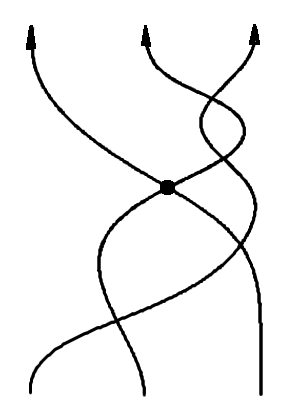}}} 
\longleftrightarrow
\raisebox{-0.5in}{\scalebox{0.8}{\includegraphics{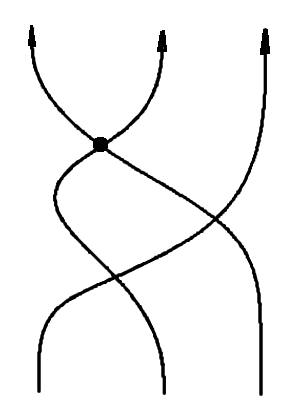}}}
\]

\end{proof}

Similar move sequences yield the other oriented flat/singular type III moves.

\begin{lemma}
The reverse oriented singular II move follows from the flat moves and
the moves pictured above.
\end{lemma}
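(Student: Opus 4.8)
The plan is to follow the template of the previous lemma and exhibit an explicit chain of diagrams, joined by flat Reidemeister moves and the two pictured direct singular moves, carrying one side of the reverse oriented singular II move to the other. Before starting I would record the tools available: all of the flat Reidemeister moves (in both orientations), the two pictured direct singular moves, and---by the previous lemma together with the remark that ``similar move sequences yield the other oriented flat/singular type III moves''---every oriented singular type III move. It is worth emphasising why this cannot be handled as in the flat case: there, the reverse II move was obtained from the direct II move using axiom (0), the right-invertibility of $x \mapsto x^y$ and $x \mapsto x_y$; singular crossings are permanent and satisfy no such axiom, so the orientation reversal must be produced on the level of diagrams, never creating or destroying the singular vertex.

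First I would take the bigon formed by the moving strand and one edge of the singular vertex in the reverse (antiparallel) II configuration, and introduce a flat Reidemeister I curl on the moving strand just outside this bigon. Next I would apply one of the now-available oriented singular type III moves to slide the curl's crossing past the singular vertex; the effect is to reorganise the antiparallel singular bigon into a configuration in which the two relevant strands run parallel, matching precisely the patch appearing in the pictured direct singular II move.

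At that point I would invoke the direct singular II move, and then clean up: a flat Reidemeister III move and a flat Reidemeister II move reposition the remaining crossings, and a final flat Reidemeister I move cancels the curl introduced at the start, leaving the opposite side of the reverse singular II move. Throughout, the singular vertex is only translated within the diagram, so every step is legal. As in the previous lemma, the cleanest presentation is a short horizontal chain of figures connected by $\longleftrightarrow$, with the single use of the pictured direct move in the middle.

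The step I expect to be the main obstacle is choosing the handedness of the introduced curl and the precise singular III move in the second paragraph so that the repositioned patch genuinely coincides with the pictured direct singular II move, rather than with a patch to which no available move applies; the remaining flat cancellations are then essentially forced. The same strategy, adjusting only the curl and the cleanup moves, should also handle any remaining unpictured orientations of the singular II move.
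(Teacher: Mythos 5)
Your overall strategy---conjugating the pictured direct singular II move by auxiliary moves and undoing them afterwards---is the same general idea as the paper's proof, but your specific plan has two gaps, the second of which is fatal. First, your pivotal step is not a legal move application. An oriented singular III move slides a strand segment that crosses \emph{two} of the four edges at the singular vertex (forming an empty triangle with the vertex) over to the other side of the vertex. A freshly introduced Reidemeister I curl contributes a single self-crossing that does not meet the vertex's edges at all, so no singular III move can act on ``the curl's crossing''; you would need extra flat II moves even to reach a configuration where some singular III move applies, and what then slides past the vertex is a strand segment carrying two crossings, not the curl's crossing.

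Second, the move budget you fix---one singular III move, one direct singular II move, and otherwise only flat moves---provably cannot produce the reverse singular II move. The rigid vertex cuts each strand into an incoming and an outgoing arc, so every flat crossing between the two strands has one of four types: $(A_{\mathrm{in}},B_{\mathrm{in}})$, $(A_{\mathrm{in}},B_{\mathrm{out}})$, $(A_{\mathrm{out}},B_{\mathrm{in}})$, $(A_{\mathrm{out}},B_{\mathrm{out}})$. Flat moves preserve all four counts mod $2$: a flat II move creates two crossings of the same type (neither of its two strands can pass through the vertex), a flat III move never changes which arcs cross, and a flat I move creates only self-crossings. The direct singular II move changes the $(A_{\mathrm{in}},B_{\mathrm{in}})$ and $(A_{\mathrm{out}},B_{\mathrm{out}})$ counts by one each, while any singular III move changes by one each the two counts sharing a single arc (for instance $(A_{\mathrm{in}},B_{\mathrm{out}})$ and $(A_{\mathrm{out}},B_{\mathrm{out}})$, when a piece of $B_{\mathrm{out}}$ slides past the vertex). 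The move you must produce changes $(A_{\mathrm{in}},B_{\mathrm{out}})$ and $(A_{\mathrm{out}},B_{\mathrm{in}})$ by one each, and checking the four possibilities shows this parity vector is never the sum of one ``singular III'' vector and the ``direct II'' vector. Hence any correct derivation must use an even number, at least two, of singular III applications. The paper's proof is consistent with this constraint by its very shape: it carries one side of the reverse move to a \emph{symmetric} intermediate diagram, applies the direct singular II move there, and obtains the other side by reversing (mirroring) the approach sequence, so every auxiliary move is used a second time. Your one-directional plan, with a cleanup consisting only of flat moves, cannot close up no matter how the handedness of the curl or the choice of singular III move is adjusted.
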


\begin{proof}

Starting with one side of a reverse singular II move, we can use flat
moves to get a symmetrical diagram in which we can apply a direct singular
II move.
\[\raisebox{-0.5in}{\scalebox{0.8}{\includegraphics{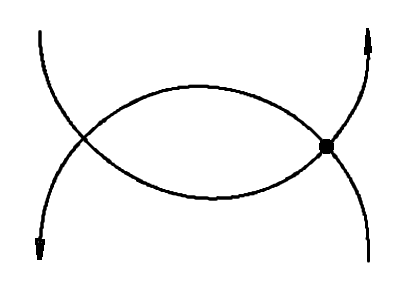}}} 
\longleftrightarrow
\raisebox{-0.5in}{\scalebox{0.8}{\includegraphics{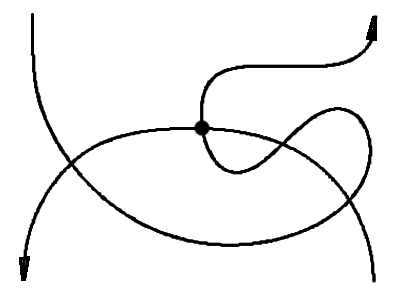}}} 
\longleftrightarrow
\raisebox{-0.5in}{\scalebox{0.8}{\includegraphics{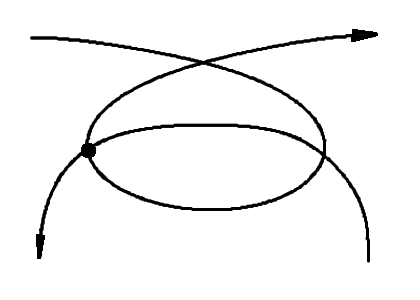}}}
\]
Reversing the process gives the other side of the reverse singular II move.
\end{proof}

\begin{example}
\textup{Let $X$ be a semiquandle. Then clearly setting
$x^{\hat{y}}=x^y$ and $x_{\hat{y}}=x_y$ for all $x,y\in X$ defines
a compatible singular structure, which we call the \textit{flat} singular
structure, $(X,X)$.}
\end{example}

\begin{example}
\textup{Let $X$ be a semiquandle. Then $a^{\hat{b}}=a_{\hat{b}}=b$ is a 
compatible singular structure, since we have}
\[(y_x)^{(\hat{x^y})}=x^y=(y_{\hat{x}})^{(x^{\hat{y}})},\quad
(x^y)_{(\hat{y_x})}=y_x=(x^{\hat{y}})_{(y_{\hat{x})}}\]
\[(x^y)^{\hat{z}}=z = (z_y)^{y^z}=(x^{\hat{z_y}})^{y^z},\quad  
(y_x)^{z_{\hat{x^y}}}= (y_x)^{x^y}= y=(y^z)_{\hat{z_y}}=(y^z)_{x^{\hat{z_y}}}\] 
\textup{and} 
\[(z_{\hat{x^y}})_{y_x}=(x^y)_{y_x}=x=(z_y)_{\hat{x}}.\]
\textup{Let us call this singular structure the \textit{operator} singular
structure on $X$, denoted $(X,O)$.}
\end{example}

As with the flat virtual case, for any flat singular virtual link $L$ there
is an associated \textit{fundamental singular semiquandle} $FSSQ(L)$ with 
presentation readable from the diagram. Elements of $FSSQ(L)$ are equivalence
classes of singular semiquandle words in generators corresponding to semiarcs
in the diagram (here we divide the diagram at both flat and singular 
crossing points, but not at virtual crossings) under the equivalence 
relation generated by the axioms (0), (i), (ii), (iii), (hi) and (hii).

\begin{example}
\textup{The \textit{triple crazy trefoil} pictured below has the listed
fundamental singular semiquandle presentation.}
\[\raisebox{-0.5in}{\includegraphics{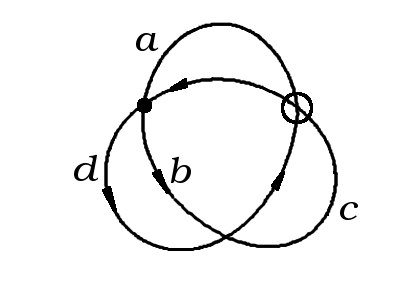}}
\langle a,\ b,\ c,\ d \ | \ a^{\hat{c}}=b,\ c_{\hat{a}}=d,\ d^b=a,\ b_d=c 
\rangle. \]
\end{example}

As with the semiquandle structure, we can represent the singular
operations in a finite singular semiquandle with matrices encoding
the operation tables. Indeed, it seems convenient to combine these 
matrices with the semiquandle operation matrices into a single block
matrix of the form 
\[M_T=\left[\begin{array}{c|c} 
i^j & i_j \\ \hline
i^{\hat j} & i_{\hat{j}}
\end{array}\right].\]

\begin{example}
\textup{The constant action semiquandle $X=\{1,2,3\}$ with $\sigma=(132)$
and operator singular structure $a^{\hat b}=a_{\hat b}=b$ has block matrix}
\[M_{(X,O)}=\left[\begin{array}{ccc|ccc}
3 & 3 & 3 & 2 & 2 & 2 \\
1 & 1 & 1 & 3 & 3 & 3 \\
2 & 2 & 2 & 1 & 1 & 1 \\ \hline
1 & 2 & 3 & 1 & 2 & 3 \\
1 & 2 & 3 & 1 & 2 & 3 \\
1 & 2 & 3 & 1 & 2 & 3 \\
\end{array}\right].\]
\end{example}

\section{\large \textbf{Virtual semiquandles and virtual singular 
semiquandles}}
\label{vssq}

As with singular crossings, we can further generalize semiquandles
by adding an operation at virtual crossings. The simplest way to do this is
to use a unary operation at each virtual crossing defined by applying 
a bijection $v$ when going through a virtual crossing from right to left
when looking in the direction of the strand being crossed and applying
$v^{-1}$ when going through a virtual crossing from left to right
when looking in the direction of the strand being crossed. 

\[\includegraphics{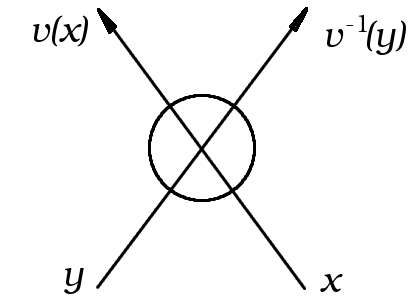} \]

As noted in
\cite{KM}, this setup ensures that the virtual I, II and III moves are 
respected by the virtual operation.

\[
\includegraphics{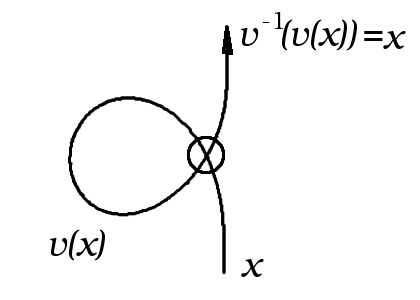} \quad
\includegraphics{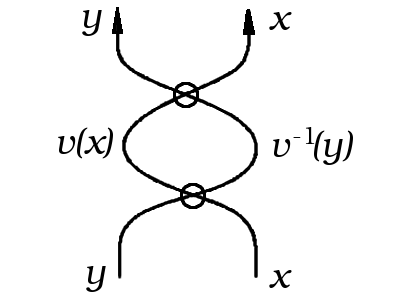} \quad
\includegraphics{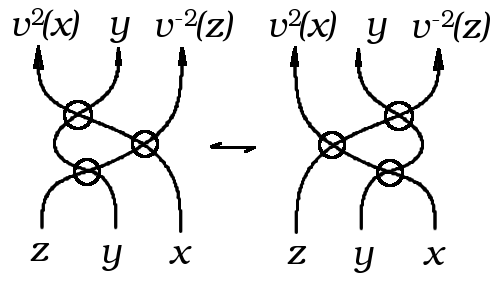} 
\]

The interaction of the virtual crossings with the flat and singular 
crossings given by the Reidemeister moves tell us how the virtual 
operation should interact with the semiquandle and singular semiquandle
structures -- namely, $v$ must be an automorphism of both structures.

\[\includegraphics{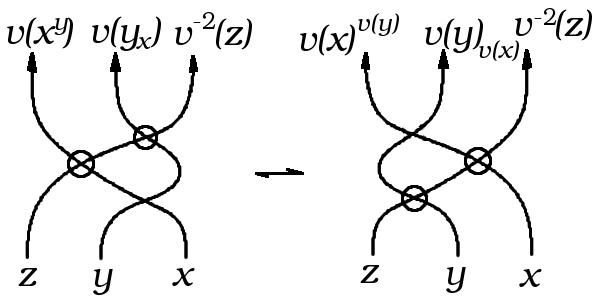} \quad
\includegraphics{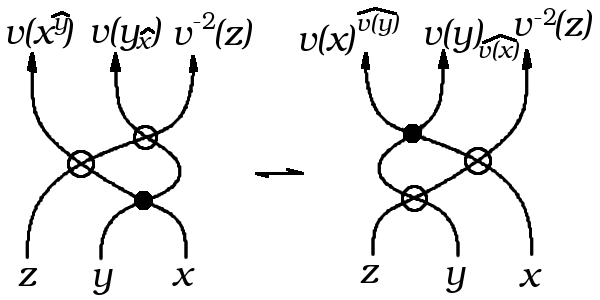}\]

\begin{definition}
\textup{A \textit{virtual semiquandle} is a semiquandle $S$ with a
choice of automorphism $v:S\to S$. A \textit{virtual singular semiquandle}
is a singular semiquandle with a semiquandle automorphism $v:S\to S$
which is also an automorphism of the singular structure. That is,
$v:S\to S$ is a bijection satisfying}
\[v(x^y)=v(x)^{v(y)}, \quad
v(x_y)=v(x)_{v(y)}, \quad
v(x^{\hat{y}})=v(x)^{\hat{v(y)}}, \quad \mathrm{and} \quad
v(x_{\hat{y}})=v(x)_{\hat{v(y)}}.
\]
\end{definition}

\begin{example}
\textup{Every semiquandle is a virtual semiquandle with $v=\mathrm{Id}_S$.
More generally, the set of virtual semiquandle structures on a semiquandle
$S$ corresponds to the set of conjugacy classes in the automorphism
group $\mathrm{Aut}(S)$ of the semiquandle $S$: let 
$v,v',\phi\in \mathrm{Aut}(S)$ with $v'=\phi^{-1}v\phi$. Then 
$\phi(S,v)\to (S,v')$ is an isomorphism of virtual semiquandles.}
\end{example}

Every flat singular virtual knot or link has a \textit{fundamental virtual 
singular semiquandle} obtained by dividing the knot or link into semiarcs
at flat, singular and virtual crossings; then $VFSSQ(L)$ has generators 
corresponding to semiarcs and relations at the crossings as determined 
by crossing type in addition to relations coming from the virtual singular
semiquandle axioms.

\section{\large \textbf{Counting invariants of flat singular virtuals}} 
\label{ci}

As with finite groups, quandles and biquandles, finite semiquandles can be
used to define computable invariants of flat virtual knots and links
by counting homomorphisms. 

\begin{definition}
\textup{Let $L$ be a flat virtual link and $T$ a finite semiquandle. The 
\textit{semiquandle counting invariant} of $L$ with respect to $T$ is
the cardinality}
\[sc(L,T)=|\mathrm{Hom}(FSQ(L),T)|\]
\textup{of the set of semiquandle homomorphisms $f:FSQ(L)\to T$ from
the fundamental semiquandle of $L$ to $T$ (i.e., maps such that 
$f(x_y)=f(x)_{f(y)}$ and $f(x^y)=f(x)_{f(y)}$ for all $x,y\in FSQ(L)$). }
\end{definition}

\begin{remark}
\textup{A semiquandle homomorphism $f:FSQ(L)\to T$ can be pictured as 
a ``coloring'' of a diagram $D$ of $L$ by $T$, i.e., an assignment of
an element of $T$ to each semiarc in $D$ such that the colors satisfy the
semiquandle operation conditions at every crossing.}
\end{remark}

\begin{example}
\textup{The semiquandle counting invariant with respect to the semiquandle
$T$ in example \ref{sq1} distinguishes the flat Kishino 
knot $FK$ from from the flat unknot $FU$ with $sc(FK,T)=16$ and
$sc(FU,T)=4$. This same semiquandle also distinguishes the flat virtual
knot $K$ from \cite{K} below from both the unknot and the flat Kishino,
with $sc(K,T)=2$.}
\[\includegraphics{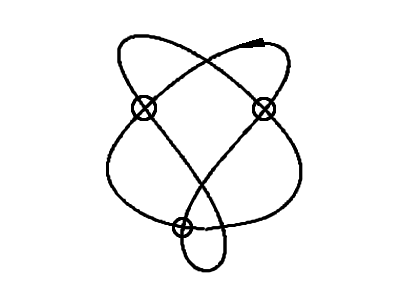}\]
\end{example}

We can enhance the semiquandle counting invariant by taking note of the 
cardinality of the image subsemiquandles $\mathrm{Im}(f)$ for each 
homomorphism to obtain a multiset-valued invariant, which we can also express
in a polynomial form by converting multiset elements to exponents of
a dummy variable $z$ and multiplicities to coefficients. Note that
specializing $z=1$ in the enhanced invariant yields the original counting
invariant.

\begin{definition}
\textup{Let $L$ be a flat virtual link and $T$ a finite semiquandle.
The \textit{enhanced semiquandle counting multiset} is the multiset }
\[sqcm(L,T)=\{\mathrm{Im}(f)\ | \ f\in \mathrm{Hom}(FSQ(L),T)) \}\]
\textup{and the \textit{enhanced semiquandle polynomial} is}
\[sqp(L,T)= \sum_{f\in \mathrm{Hom}(FSQ(L),T)} z^{|\mathrm{Im}(f)|}.\]
\end{definition}

For singular semiquandles, we also have counting invariants and polynomial
enhanced invariants.

\begin{definition}
\textup{Let $L$ be a flat singular virtual link and $(T,S)$ a finite singular 
semiquandle. Then we have the \textit{singular semiquandle counting invariant}}
\[ssc(L,(T,S))=|\mathrm{Hom}(FSSQ(L),(T,S))|,\]
\textup{the \textit{enhanced singular semiquandle counting multiset}}
\[ssqcm(L,(T,S))=\{\mathrm{Im}(f)\ | \ f\in \mathrm{Hom}(FSSQ(L),(T,S))) \},\]
\textup{and the \textit{enhanced singular semiquandle polynomial}}
\[ssqp(L,(T,S))= \sum_{f\in \mathrm{Hom}(FSSQ(L),(T,S))}z^{|\mathrm{Im}(f)|}.\]
\end{definition}

\begin{example}
\textup{The constant action semiquandle $(X_{(132)},O)$ with operator
singular structure distinguishes the triple crazy trefoil $TCT$ from the 
singular knot with one singular crossing and no other crossings $SU_1$:}
\[
\begin{array}{cc}
\scalebox{0.2}{\includegraphics{TripleCrazyTrefoil.png}} &
\includegraphics{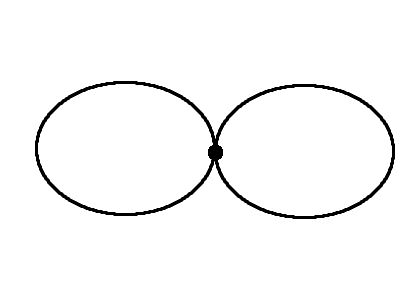}  \\
ssqp(TCT,(X_{(132)},O))=0 & ssqp(SU_1,(X_{(132)},O))= 9z^3
\end{array}
\]
\end{example}

Finally, we have counting invariants for flat singular virtual knots and
links defined analogously using finite virtual singular semiquandles.

\begin{definition}
\textup{Let $L$ be a flat singular virtual link and $(T,S,v)$ a finite virtual 
singular semiquandle. Then we have the \textit{virtual singular semiquandle 
counting invariant}}
\[vssc(L,(T,S,v))=|\mathrm{Hom}(FVSSQ(L),(T,S,v))|,\]
\textup{the \textit{enhanced virtual singular semiquandle counting multiset}}
\[vssqcm(L,(T,S,v))=\{\mathrm{Im}(f)\ | \ f\in \mathrm{Hom}(FVSSQ(L),(T,S,v)) 
\},\]
\textup{and the \textit{enhanced virtual singular semiquandle polynomial}}
\[vssqp(L,(T,S,v))= \sum_{f\in \mathrm{Hom}(FVSSQ(L),(T,S,v))}z^{|\mathrm{Im}(f)|}.\]
\end{definition}

\begin{example}
\textup{The \textit{flat virtual Hopf link} $fH$ below is distinguished from 
the flat unlink of two components by the counting invariants with respect to the
listed virtual semiquandle. Note that we can regard $T$ as a flat singular
virtual semiquandle with trivial singular operations $x^{\hat{y}}=x=x_{\hat{y}}$.}
\[M_{T,S}=
\left[\begin{array}{ccc|ccc}
1 & 3 & 1 & 1 & 3 & 1 \\
2 & 2 & 2 & 2 & 2 & 2 \\
3 & 1 & 3 & 3 & 1 & 3 \\ 
\end{array}\right], \quad v=(13)\]
\[\begin{array}{cc}
\includegraphics{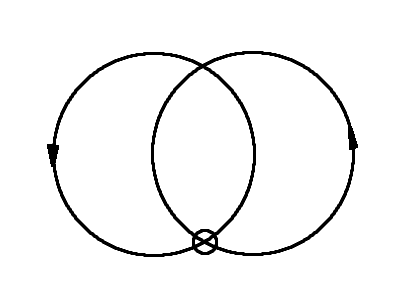} & \includegraphics{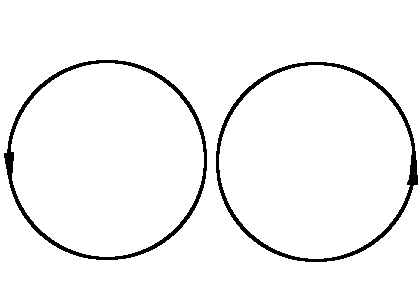} \\
vsqp(fvH,(T,S,v)) = q+4z^2 & vsqp(U_2,(T,S,v)) = q+4z^2+4z^3 \\
\end{array}
\]
\end{example}

\begin{remark}\textup{
We note that a virtual semiquandle is a virtual singular semiquandle
with trivial singular structure, i.e $x^{\hat{y}}=x_{\hat{y}}=x$, a singular
semiquandle is a virtual singular semiquandle with trivial virtual
operation, i.e. $v=\mathrm{Id}$, and a semiquandle is a virtual singular 
semiquandle with trivial virtual and singular structures.}
\end{remark}

\section{\large \textbf{Application to Vassiliev invariants}}\label{A}

In~\cite{H}, we find several degree one Vassiliev invariants for virtual 
knots. One invariant, $\mathbf{S}$, takes its values in the free abelian 
group on the set of two-component flat virtual links. Another invariant, 
$\mathbf{G}$, takes its values in the free abelian group on the set of 
flat virtual singular knots with one singularity. It is easy to show that 
$\mathbf{G}$ is at least as strong as $\mathbf{S}$, but somewhat difficult
 to show that $\mathbf{G}$ is strictly stronger than $\mathbf{S}$. Here, 
we give the definitions of these invariants and provide an alternative 
proof that $\mathbf{G}$ is strictly stronger than $\mathbf{S}$.

\begin{definition} Let $K$ be a virtual knot with diagram $\widetilde K$. 
Let $\widetilde K^d_{smooth}$ be the flat virtual link obtained by smoothing 
$\widetilde K$ at the crossing $d$ and projecting onto the associated flat 
virtual link. Furthermore, let $\widetilde K^0_{link}$ be the flat virtual 
link obtained by taking the flat projection of $\widetilde K$ disjoint union 
with the unknot. If $[L]$ represents the generator of the free abelian group 
on the set of two-component flat virtual links associated to the link $L$, 
then 
\[\mathbf{S}(K)=\sum _d sign(d)([\widetilde K ^d_{smooth}]-
[\widetilde K^0_{link}]).\] 
Here, the sum ranges over all classical crossings 
in $\widetilde K$, and $sign(d)$ is the local writhe.
\end{definition}

Since this ``smoothing" invariant has values involving flat virtual links, 
it is clear that semiquandles may be of use in computing $\mathbf{S}$ for 
pairs of virtual knots. Moreover, singular semiquandles can be put to use 
when computing the following invariant.

\begin{definition} Let $K$ be a virtual knot with diagram $\widetilde K$. 
Let $\widetilde K^d_{glue}$ be the flat virtual singular knot obtained by 
gluing $\widetilde K$ at the crossing $d$ and projecting onto the associated 
flat virtual singular knot. Let $\widetilde K^0_{sing}$ be the flat virtual 
singular knot obtained by taking the flat projection of $\widetilde K$, 
introducing a kink via the flat Reidemeister 1 move, and gluing at the 
resulting crossing. If $[L]$ represents the generator of the free abelian 
group on the set of flat virtual singular knots associated to the knot $L$, 
then 
\[\mathbf{G}(K)=\sum _d sign(d)([\widetilde K ^d_{glue}]-
[\widetilde K^0_{sing}]).\]
Here again, the sum ranges over all classical 
crossings in $\widetilde K$, and $sign(d)$ is the local writhe.
\end{definition}

It is proven in~\cite{H} that both $\mathbf{S}$ and $\mathbf{G}$ are degree 
one Vassiliev invariants and $\mathbf{G}$ is at least as strong as 
$\mathbf{S}$. To show that $\mathbf{G}$ is stronger than $\mathbf{S}$, 
consider the following pair of virtual knots.

\[
\begin{array}{cc}
\scalebox{0.3}{\includegraphics{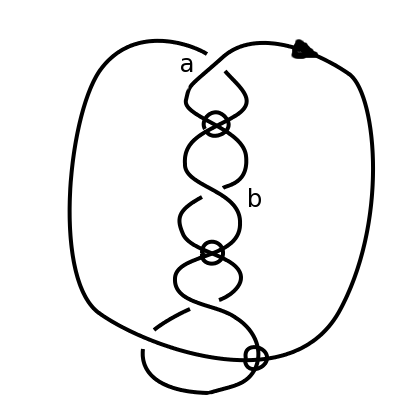}} &
\scalebox{0.3}{\includegraphics{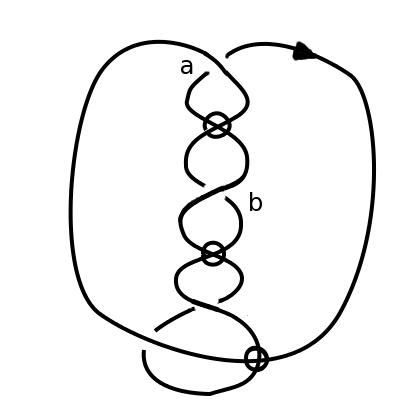}}  \\
\end{array}
\]

Let us call the first of these knots $K_1$ and the second $K_2$. Since the 
only difference between the two knots is the signs of the crossings labelled 
$a$ and $b$, we see that 
\[\mathbf{S}(K_2)-\mathbf{S}(K_1)= 2([\widetilde K ^a_{smooth}]-
[\widetilde K ^b_{smooth}])\]

and  

\[\mathbf{G}(K_2)-\mathbf{G}(K_1)= 2([\widetilde K ^a_{glue}]-
[\widetilde K ^b_{glue}]).\]

Now $\widetilde K ^a_{smooth}$ is the same as $\widetilde K ^b_{smooth}$. They 
are both the flat virtual link pictured below.

\[\includegraphics{ah-sn-23.png}\]

It follows that $\mathbf{S}(K_1)=\mathbf{S}(K_2)$. On the other hand, we can 
show using singular semiquandles that $\widetilde K ^a_{glue}$ and 
$\widetilde K ^b_{glue}$, as pictured below, are distinct.

\[
\begin{array}{cc}
\scalebox{0.3}{\includegraphics{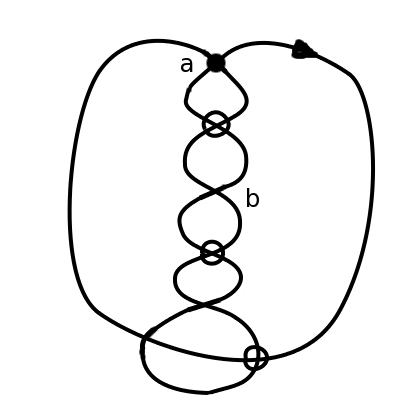}} &
\scalebox{0.3}{\includegraphics{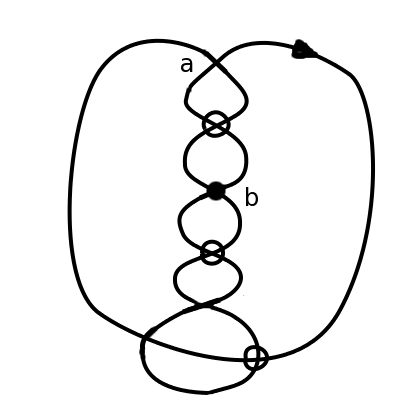}}  \\
\end{array}
\]

Consider the following singular semiquandle, $T$, given in terms of its 
matrix $M$.

\[M_=\left[\begin{array}{cccc|cccc}
1 & 4 & 2 & 3 & 1 & 3 & 4 & 2 \\
2 & 3 & 1 & 4 & 3 & 1 & 2 & 4  \\
4 & 1 & 3 & 2 & 2 & 4 & 3 & 1 \\
3 & 2 & 4 & 1 & 4 & 2 & 1 & 3 \\ \hline
1 & 1 & 4 & 4 & 1 & 2 & 2 & 1 \\
1 & 1 & 4 & 4 & 4 & 3 & 3 & 4  \\
2 & 2 & 3 & 3 & 4 & 3 & 3 & 4 \\
2 & 2 & 3 & 3 & 1 & 2 & 2 & 1 \\
\end{array}\right]\]

The enhanced singular semiquandle polynomial for $\widetilde K ^a_{glue}$ is 
$ssqp(\widetilde K ^a_{glue},T)=2z$ while the polynomial for 
$\widetilde K ^b_{glue}$ is $ssqp(\widetilde K ^b_{glue},T)=2z+2z^4$. Hence, 
the two flat virtual singular knots are distinct and, thus, 
$\mathbf{G}(K_1)\neq\mathbf{G}(K_2)$.

\section{\large \textbf{Questions}}\label{q}

In this section, we collect questions for future research.

Singular semiquandles bear a certain resemblance to virtual biquandles,
in which a biquandle is augmented with operations at virtual crossings.
Given a biquandle $B$, the set of virtual biquandle structures on $B$
forms a group isomorphic to the automorphism
group of $B$. What is the structure of the set of singular semiquandle
structures on a semiquandle $X$?

Our algebra-agnostic approach to computation of our various 
semiquandle-based invariants works well for small-cardinality semiquandles
and link diagrams with small crossing numbers. However, for links with higher
crossing numbers and larger
coloring semiquandles we will need more algebraic descriptions. We have 
given a few examples of classes of semiquandle structures, e.g. constant
action semiquandles and operator singular structures. What are some examples
of group-based or module-based semiquandle and singular semiquandle 
structures akin to Alexander biquandles? (Note that the only Alexander 
biquandles which are semiquandles are constant action Alexander biquandles).

Enhancement techniques for biquandle counting invariants which should
extend to semiquandles include \textit{semiquandle cohomology} which is
the special case of Yang-Baxter cohomology described in \cite{CYB} and 
the flattened case of $S$-cohomology as described in \cite{CN}. Similarly,
we might define \textit{semiquandle polynomials} and the resulting 
enhancements of the counting invariants as in \cite{N}.What
other enhancements of semiquandle counting invariants are there?

What is the relationship, if any, between semiquandle invariants and 
quaternionic biquandle invariants described in \cite{BA}?

\bigskip

Our \texttt{python} code for computing semiquandle-based invariants is 
available from the second listed author's website at 
\texttt{www.esotericka.org.}

\bigskip

\textsc{Department of Mathematics, Oberlin College, 10 North Professor St.,
Oberlin, Ohio 44074}

\noindent \textit{Email:} \texttt{ahenrich@oberlin.edu}

\bigskip

\textsc{Department of Mathematics, Claremont McKenna College,
 850 Colubmia Ave., Claremont, CA 91711}

\noindent
\textit{Email address:} \texttt{knots@esotericka.org}

\end{document}